\newcommand{\refacpf}{(\hyperref[model:ac_pf]{AC-PF})}
\newcommand{\refsocpf}{(\hyperref[model:soc_pf]{SOC-PF})}
\newcommand{\refcdfpf}{(\hyperref[model:cdf_pf]{CDF-PF})}
\newcommand{\refacepf}{(\hyperref[model:ac_pf_e]{AC-E-PF})}
\newcommand{\refsocepf}{(\hyperref[model:soc_pf_e]{SOC-E-PF})}
\newcommand{\refcdfepf}{(\hyperref[model:cdf_pf_e]{CDF-E-PF})}
\newtheorem{theorem}{Theorem}[section]
\begin{document}

\title{DistFlow Extensions for AC Transmission Systems} 
\author[1]{Carleton Coffrin} 
\author[1]{Hassan L. Hijazi}
\author[2]{Pascal Van Hentenryck}

\affil[1]{Advanced Network Science Initiative, Los Alamos National Laboratory}
\affil[2]{Department of Industrial and Operations Engineering, University of Michigan}
\maketitle

\abstract{
Convex relaxations of the power flow equations and, in
particular, the Semi-Definite Programming (SDP), Second-Order
Cone (SOC), and Convex DistFlow (CDF) relaxations, have attracted 
significant interest in recent years. 
Thus far, studies of the CDF model and its connection to the other relaxations have been limited to power distribution systems, 
which omit several parameters necessary for modeling transmission systems.
To increase the applicability of the CDF relaxation, this paper develops an extended CDF model that is suitable for transmission systems by incorporating bus shunts, line charging, and transformers.
Additionally, a theoretical result shows that the established equivalence of the SOC and CDF models for distribution systems also holds in this transmission system extension.
}

\section*{Nomenclature}
\begin{multicols}{2} 

\begin{description}
  \item [{$N$}]  - The set of nodes in the network 
  \item [{$E$}]  - The set of {\em from} edges in the network 
  \item [{$E^R$}]  - The set of {\em to} edges in the network 
  %
  \item [{$\bm i$}] - imaginary number constant
  \item [{$I$}] - AC current
  \item [{$S = p+ \bm iq$}] - AC power
  \item [{$V = v \angle \theta$}]  - AC voltage
  \item [{$Z = r+ \bm ix$}] - Line impedance
  \item [{$Y = g + \bm ib$}]  - Line admittance
  \item [{$T = t \angle \theta^t$}]  - Transformer properties
  \item [{$Y^s = g^s + \bm ib^s$}]  - Bus shunt admittance
  \item [{$W $}]  - Product of two AC voltages
  \item [{$L$}]  - Current magnitude squared, $|I|^2$
  %
  \item [{$Y^c = g^c + \bm ib^c$}] - Line charging
  \item [{$s^u$}] - Line apparent power thermal limit
  \item [{$\theta^\Delta$}] - Voltage angle difference limit
  \item [{$S^d = p^d+ \bm iq^d$}] - AC power demand
  \item [{$S^g = p^g+ \bm iq^g$}] - AC power generation
  \item [{$c_0,c_1,c_2$}] - Generation cost coefficients 
 %
   \item [{$\Re(\cdot)$}] - Real component of a complex number
   \item [{$\Im(\cdot)$}] - Imaginary component of a complex number
   \item [{$(\cdot)^*$}] - Conjugate of a complex number
   \item [{$|\cdot|$}] - Magnitude of a complex number, $l^2$-norm
   \item [{$\angle$}] - Angle of a complex number
  %
  %
  \item [{$x^u$}] - Upper bound of $x$
  \item [{$x^l$}] - Lower bound of $x$
  \item [{$\widecheck{x}$}] - Convex envelope of $x$
  \item [{$\bm x$}] - A constant value
\end{description}

\end{multicols}

\clearpage
\section{Introduction}
\label{sec:intro}

Convex relaxations of the power flow equations have attracted
significant interest in recent years. They include the Semi-Definite
Programming (SDP) \cite{Bai2008383}, Second-Order Cone (SOC)
\cite{Jabr06}, Convex-DistFlow (CDF) \cite{6102366}, and the recent
Quadratic Convex (QC) \cite{qc_opf_tps,Hijazi2017} and Moment-Based \cite{7038397,6980142} relaxations.
Much of the excitement underlying this line of research comes from the fact that
the SDP relaxation has shown to be tight on a variety of case studies
\cite{5971792}, opening a new avenue for accurate, reliable, and
efficient solutions to a variety of power system applications. Indeed,
industrial-strength optimization tools (e.g., Gurobi \cite{gurobi},
cplex \cite{cplex}, Mosek \cite{mosek}) are now available to solve
various classes of convex optimization problems. 
In the context of power distribution systems, the relationships between the SDP, SOC, and CDF relaxations is well-understood \cite{6756976,6815671}.
In particular, the SOC and CDF relaxations are known to be equivalent \cite{6483453,6897933} and the QC and SDP relaxations are at least as strong
as both of these \cite{6345272, qc_opf_tps}. 

Thus far, studies of the CDF model and its connection to the other relaxations have been limited to power distribution systems, 
which omit several parameters necessary for modeling transmission systems (i.e. bus shunts, line charging, and transformers).
This paper develops an extended CDF model that incorporates all of the parameters necessary to use the CDF model on transmission system test cases \cite{nesta,pglib_opf}.
The main contributions of this work can be summarized as:
\begin{enumerate}
\item Developing a novel methodology for the derivation of the classic CDF model.
\item Utilizing that methodology to produce an extended CDF model that is applicable to transmission systems.
\item Proving that the equivalence of the SOC and CDF models carries over into the transmission system case.
\end{enumerate}

The rest of the paper is organized as follows.  
Section \ref{sec:ac_pf} reviews the formulation of the simplest AC power flow feasibility problem (AC-PF).
Section \ref{sec:ac_prop} develops a number of general properties of AC-PF that are utilized throughout the paper.
Section \ref{sec:ac_relax} reviews the two well known relaxations SOC and CDF for the AC-PF problem.
Section \ref{sec:ac_ex} is the heart of the paper and extends all of the previous results to an extended AC-PF formulation incorporating bus shunts, line charging, and transformers.
Lastly, the paper concludes with Section \ref{sec:conc}.

\section{AC Power Flow}
\label{sec:ac_pf}

This section reviews the specification of the AC Power Flow (AC-PF) feasibility problem
and introduces the notations used in the paper. In the equations,
bold face is used to reflect constants while script indicates variables.  
Capital letters refer to complex numbers while lower case are real numbers.

A power network is composed of a variety of components such as buses, lines, generators, and loads.  
The network can be interpreted as a graph $(N,E)$ where the set of buses $N$ represent the nodes and the set of lines $E$ represent the edges. 
Note that $E$ is a set of directed arcs and $E^R$ will be used to indicate those arcs in the reverse direction.
To break numerical symmetries in the model and to allow easy comparison of solutions, a reference node $r \in N$ is also specified.

The AC power flow equations are based on complex quantities for 
current $I$, voltage $V$, admittance
$Y$, and power $S$, which are linked by the physical properties of
Kirchhoff's Current Law (KCL), i.e.,
\begin{align}
& I^g_i - {\bm I^d_i} = \sum_{\substack{(i,j)\in E \cup E^R}} I_{ij}  \label{eq:kcl}
\end{align}
Ohm's Law, i.e.,
\begin{align}
& I_{ij} = \bm Y_{ij} (V_i - V_j)  \label{eq:ohm}
\end{align}
and the definition of AC power, i.e.,
\begin{align}
& S_{ij} = V_{i}I_{ij}^* \label{eq:power}
\end{align}
Combining these three properties yields the AC Power Flow equations, i.e.,
\begin{subequations}
\begin{align}
& S^g_i - {\bm S^d_i} = \sum_{\substack{(i,j)\in E \cup E^R}} S_{ij} \;\; \forall i\in N \label{eq:bus_s} \\ 
& S_{ij} = \bm Y^*_{ij} |V_i|^2 - \bm Y^*_{ij} V_i V^*_j \;\; (i,j)\in E \cup E^R \label{eq:line_s}
\end{align}
\end{subequations}

\noindent
These non-convex nonlinear equations define how power flows in the
network and are a core building block of many power system
applications. However, practical applications typically include various
operational side constraints on the flow of power. We now review some 
of the most significant ones.

\paragraph{Generator Capabilities}

AC generators have limitations on the amount of active and reactive
power they can produce $S^g$, which is characterized by a generation
capability curve \cite{9780070359581}.  Such curves typically define
nonlinear convex regions which are often approximated by boxes in
AC transmission system test cases, i.e.,
\begin{align}
& \bm {S^{gl}}_i \leq S^g_i \leq \bm {S^{gu}}_i \;\; \forall i \in N 
\end{align}

\paragraph{Line Thermal Limit}

AC power lines have thermal limits \cite{9780070359581} to prevent
lines from sagging and automatic protection devices from activating.
These limits are typically given in Volt Amp units and constrain 
the apparent power flows on the lines, i.e.,
\begin{align}
& |S_{ij}| \leq \bm {s^u}_{ij} \;\; \forall (i,j) \in E \cup E^R 
\end{align}

\paragraph{Bus Voltage Limits}

Voltages in AC power systems should not vary too far (typically $\pm
10\%$) from some nominal base value \cite{9780070359581}.  This is
accomplished by putting bounds on the voltage magnitudes, i.e.,
\begin{align}
& \bm {v^l}_i \leq |V_i| \leq \bm {v^u}_i \;\; \forall i \in N
\end{align}
%

\paragraph{Voltage Angle Differences}

Small voltage angle differences are also a design imperative in AC power
systems \cite{9780070359581} and it has been suggested that phase
angle differences are typically less than $10$ degrees in practice
\cite{Purchala:2005gt}. These constraints have not typically been
incorporated in AC transmission test cases \cite{matpower}. However,
recent work \cite{LPAC_ijoc,Hijazi2017,7763860} have observed that
incorporating Voltage Angle Difference (VAD) constraints, i.e.,
\begin{align}
&  -\bm {\theta^\Delta}_{ij} \leq \angle \! \left( V_i V^*_j \right) \leq \bm {\theta^\Delta}_{ij} \;\; \forall (i,j) \in E \label{eq:pad_1}
\end{align}
is useful in the convexification of the AC power flow equations. For
simplicity, this paper assumes that the voltage angle difference bounds
are symmetrical and within the range $(- \bm \pi/2, \bm \pi/2 )$, i.e.,
\begin{align}
& 0 \leq \bm {\theta^{\Delta}}_{ij} \leq \frac{\bm \pi}{2} \;\; (i,j) \in E \nonumber
\end{align}
but the results presented here can be extended to more general
cases. Observe also that the VAD constraints \eqref{eq:pad_1} can be
implemented as a linear relation of the real and imaginary components
of $V_iV^*_j$ \cite{6822653}, i.e.,
\begin{align}
& \tan(-\bm {\theta^\Delta}_{ij}) \Re\left(V_iV^*_j\right) \leq \Im\left(V_iV^*_j\right) \leq \tan(\bm {\theta^\Delta}_{ij}) \Re\left(V_iV^*_j\right) \;\; \forall (i,j) \in E \label{eq:w_pad}
\end{align}
and that equation \eqref{eq:line_s} can be used to express these in terms of the $S$ variables as follows,
\begin{align}
& V_iV^*_j = |V_i|^2 - \bm Z^*_{ij} S_{ij} \label{eq:vv_factor}
\end{align}
These equations combined with \eqref{eq:w_pad} implement the VAD constraints in terms of the $V$ and $S$ variables as follows,  
\begin{align}
& \tan(-\bm {\theta^\Delta}_{ij}) \Re\left( |V_i|^2 - \bm Z^*_{ij} S_{ij} \right) \leq \Im\left( |V_i|^2 - \bm Z^*_{ij} S_{ij} \right) \leq \tan(\bm {\theta^\Delta}_{ij}) \Re\left( |V_i|^2 - \bm Z^*_{ij} S_{ij} \right) \;\; \forall (i,j) \in E \label{eq:s_pad}
\end{align}
The usefulness of these alternate formulations will be apparent later in the paper.

\paragraph{Other Constraints}
Other line flow constraints have been proposed, such as, active power
limits and voltage difference limits \cite{5971792,6822653}.  However,
we do not consider them here since, to the best of our knowledge, test
cases incorporating these constraints are not readily available.

%

\begin{model}[t]
\caption{The AC Power Flow Feasibility Problem (AC-PF)}
\label{model:ac_pf}
\begin{subequations}
\begin{align}
\mbox{\bf variables:} \nonumber \\
& S^g_i \;\; \forall i\in N \nonumber \\
& V_i \;\; \forall i\in N \nonumber \\
& S_{ij} \;\; \forall(i,j)\in E \cup E^R \nonumber \\
%
%
\mbox{\bf subject to:} \nonumber \\
& \angle V_{\bm r} = 0 \label{eq:ac_0} \\
& \bm {v^l}_i \leq |V_i| \leq \bm {v^u}_i \;\; \forall i \in N \label{eq:ac_1} \\
& \bm {S^{gl}}_i \leq S^g_i \leq \bm {S^{gu}}_i \;\; \forall i \in N \label{eq:ac_2}  \\
& |S_{ij}| \leq \bm {s^u}_{ij} \;\; \forall (i,j) \in E \cup E^R \label{eq:ac_5}  \\
& S^g_i - {\bm S^d_i} = \sum_{\substack{(i,j)\in E \cup E^R}} S_{ij} \;\; \forall i\in N \label{eq:ac_3}  \\ 
& S_{ij} = \bm Y^*_{ij} |V_i|^2 - \bm Y^*_{ij} V_i V^*_j \;\; (i,j)\in E \cup E^R \label{eq:ac_4}  \\
& -\bm {\theta^\Delta}_{ij} \leq \angle (V_i V^*_j) \leq \bm {\theta^\Delta}_{ij} \;\; \forall (i,j) \in E  \label{eq:ac_6} 
\end{align}
\end{subequations}
\end{model}

\paragraph{The AC Power Flow Feasibility Problem}

Combining all of these constraints yields the AC Power Flow Feasibility presented in Model \ref{model:ac_pf}. 
Constraint \eqref{eq:ac_0} sets the reference angle, to eliminate numerical symmetries.  
Constraints \eqref{eq:ac_3} capture KCL and constraints \eqref{eq:ac_4} capture Ohm's Law.  
Constraints \eqref{eq:ac_1} and \eqref{eq:ac_6} capture the voltage magnitude and voltage angle difference operational constraints.
Finally constraints \eqref{eq:ac_2} and \eqref{eq:ac_5} enforce the generator output and line flow limits. 
Notice that this is a non-convex nonlinear satisfaction problem due to the product of voltage variables, $V_i V^*_j$, and is NP-Hard in general \cite{verma2009power,ACSTAR2015}.

\section{Generic Properties of AC Power Flows}
\label{sec:ac_prop}

Before deriving the various relaxations of the AC power flow equations, this section develops a collection of general properties of the AC power flows \refacpf, which are utilized throughout the rest of the paper. 

\noindent
\paragraph{Absolute Square of Ohm's Law}
From definition \eqref{eq:ohm}, the absolute square of Ohm's law is,
\begin{subequations}
\begin{align}
& I_{ij}I_{ij}^*  = (\bm Y_{ij} V_i - \bm Y_{ij} V_j) (\bm Y_{ij}^*  V_i^* - \bm Y_{ij}^*  V_j^*) \\
& |I_{ij}|^2 = |\bm Y_{ij}|^2 (|V_i|^2  - V_iV_j^* - V_i^*V_j + |V_j|^2)
\end{align}
\end{subequations}
\noindent
\paragraph{Absolute Square of AC Power}
From definition \eqref{eq:power}, the absolute square of the definition of ac power is,
\begin{subequations}
\begin{align}
& S_{ij}S_{ij}^* = (V_iI_{ij}^*)(V_i^*I_{ij}) \\
& |S_{ij}|^2 = |V_i|^2 |I_{ij}|^2 \label{eq:asp}
\end{align}
\end{subequations}
\noindent
\paragraph{Absolute Square of Voltage Products}
The absolute square of the voltage product is,
\begin{subequations}
\begin{align}
& (V_iV_j^*)(V_iV_j^*)^* = (V_iV_j^*)(V_i^*V_j) \\ 
& |V_iV_j^*|^2 = |V_i|^2 |V_j|^2 \label{eq:asvp}
\end{align}
\end{subequations}
\paragraph{Line Loss}
From definition \eqref{eq:line_s}, the power loss on line $(i,j)$ is,
\begin{subequations}
\begin{align}
& S_{ij}+S_{ji} = \bm Y^*_{ij} (|V_i|^2 - V_iV_j^* - V_i^*V_j + |V_j|^2 )  \label{eq:v_loss}
\end{align}
\end{subequations}
\paragraph{Voltage Magnitude Difference}
From definition \eqref{eq:line_s}, the so-called voltage drop property is derived by subtracting the power on each end of a line $(i,j)$, solving for the $|V_i|^2,|V_j|^2$ variables, and eliminating the $S_{ji}$ term.
\begin{subequations}
\begin{align}
& S_{ij}-S_{ji} = \bm Y^*_{ij}(|V_i|^2 - V_iV_j^* + V_i^*V_j - |V_j|^2) \\
& |V_i|^2 - |V_j|^2 = \bm Z^*_{ij} S_{ij} - \bm Z^*_{ij} S_{ji} + V_iV_j^* - V_i^*V_j \\
& |V_i|^2 - |V_j|^2 = \bm Z^*_{ij} S_{ij} - \bm Z^*_{ij} (\bm Y^*_{ij} (|V_i|^2 - V_iV_j^* - V_i^*V_j + |V_j|^2) - S_{ij})  + V_iV_j^* - V_i^*V_j \\
& |V_i|^2 - |V_j|^2 = \bm Z^*_{ij} S_{ij} - (|V_i|^2 - V_iV_j^* - V_i^*V_j + |V_j|^2) + \bm Z^*_{ij} S_{ij}  + V_iV_j^* - V_i^*V_j \\
& |V_i|^2 - |V_j|^2 = \bm Z^*_{ij} S_{ij} - (|V_i|^2 - V_iV_j^* - V_i^*V_j + |V_j|^2 ) + (|V_i|^2 - V_iV_j^*)  + V_iV_j^* - V_i^*V_j \\
& |V_i|^2 - |V_j|^2 = (\bm Z^*_{ij} S_{ij} + \bm Z_{ij} S_{ij}^*) - (|V_i|^2  - V_iV_j^* - V_i^*V_j + |V_j|^2) \label{eq:v_drop}
\end{align}
\end{subequations}
\paragraph{Equivalence of Line Flow Formulations}
A key observation from these general power flow properties is that the line loss and voltage magnitude difference properties provide an alternate formulation of the power flow constraints \eqref{eq:line_s}, namely,
\begin{align}
& \eqref{eq:line_s} \Leftrightarrow \eqref{eq:v_loss}, \eqref{eq:v_drop} \nonumber
\end{align}
which follows from the fact that these two sets of constraints are simply linear combinations of one another.

\section{Derivation of the Relaxations}
\label{sec:ac_relax}

This section derives two well known convex relaxations of Model \ref{model:ac_pf}, the SOC relaxation \cite{Jabr06} and the Convex DistFlow relaxation \cite{6102366} and reviews their equivalence.  Although none of the results in this section are new, the derivation of the models presents a seemingly novel and systematic methodology that is leveraged in subsequent sections for developing the extended Convex DistFlow relaxation for transmission systems.

\subsection{The SOC Relaxation}
\label{sec:soc_relax}

The SOC relaxation was first proposed in \cite{Jabr06} and utilizes two key insights.  First, by lifting the product of voltage variables $V_i V_j^*$ into a higher dimensional space (i.e. the $W$-space),
\begin{subequations}
\begin{align}
& W_{i} = |V_{i}|^2 \;\; i \in N \label{eq:w_link_1} \\
& W_{ij} = V_i V_j^* \;\; \forall(i,j) \in E \label{eq:w_link_2} 
\end{align}
\end{subequations}
a convex relaxation of \refacpf~is obtained.  Second, the absolute square of voltage products property \eqref{eq:asvp} can be used to strengthen this $W$-space relaxation, as follows,
\begin{subequations}
\begin{align}
& |W_{ij}|^2 = W_i W_j \;\; \forall(i,j) \in E \\
& |W_{ij}|^2 \leq W_i W_j \;\; \forall(i,j) \in E \label{eq:asvp_w}
\end{align}
\end{subequations}
Notice that constraint \eqref{eq:asvp_w} is a convex second-order cone constraint, which is widely supported by industrial strength convex optimization tools (e.g., Gurobi \cite{gurobi}, CPlex \cite{cplex}, Mosek \cite{mosek}).

The complete SOC relaxation of \refacpf~is presented in Model \ref{model:soc_pf} \refsocpf.
The constraints for KCL \eqref{eq:ac_3}, generator output limits \eqref{eq:ac_2}, and line flow limits \eqref{eq:ac_5} are identical to the \refacpf~model.
Constraints \eqref{eq:soc_3}--\eqref{eq:soc_4} capture line power flow in the $W$-space.  
Constraints \eqref{eq:soc_1} and \eqref{eq:soc_2} capture the voltage and voltage angle difference operational constraints.
Finally constraints \eqref{eq:soc_5} strengthen the relaxation with voltage product second-order cone constraint.

\begin{model}[t]
\caption{The SOC Relaxation of AC Power Flow (SOC-PF)}
\label{model:soc_pf}
\begin{subequations}
\begin{align}
\mbox{\bf variables:} \nonumber \\
& S^g_i \;\; \forall i\in N \nonumber \\
& W_i \;\; \forall i\in N \nonumber \\
& W_{ij} \;\; \forall (i,j)\in E \nonumber \\
& S_{ij} \;\; \forall (i,j)\in E \cup E^R \nonumber \\
\mbox{\bf subject to:} \nonumber \\
& \eqref{eq:ac_2}, \eqref{eq:ac_5}, \eqref{eq:ac_3} \nonumber \\ 
& (\bm {v^l}_i)^2 \leq W_i \leq (\bm {v^u}_i)^2 \;\; \forall i \in N \label{eq:soc_1} \\
& \tan(-\bm {\theta^\Delta}_{ij}) \Re\left(W_{ij}\right) \leq \Im\left(W_{ij}\right) \leq \tan(\bm {\theta^\Delta}_{ij}) \Re\left(W_{ij}\right) \;\; \forall (i,j) \in E \label{eq:soc_2} \\
& S_{ij} = \bm Y^*_{ij} W_i - \bm Y^*_{ij} W_{ij} \;\; (i,j)\in E \label{eq:soc_3}  \\
& S_{ji} = \bm Y^*_{ij} W_j - \bm Y^*_{ij} W_{ij}^* \;\; (i,j)\in E \label{eq:soc_4}  \\
& |W_{ij}|^2 \leq W_i W_j  \;\; \forall (i,j) \in E \label{eq:soc_5}
\end{align}
\end{subequations}
\end{model}

\subsection{The Convex DistFlow Relaxation}
\label{sec:df_relax}

The DistFlow (DF) model is a non-convex power flow model originally developed in \cite{19266} and later the Convex DistFlow (CDF) model was proposed in \cite{6102366}.  Both DF and CDF were originally designed for radial topologies, however, \cite{6507355,6507352} show that they can be interpreted as a phase-angle relaxation of the meshed AC power flow and hence are applicable to the study of meshed power networks.  This section presents a novel derivation of the DF model and its relaxation from first principles.  The value of this derivation is to establish a clear connection between \refacpf~and the CDF model.

The DistFlow model \cite{19266} can be derived utilizing three key insights.  First, replace the line flow equations \eqref{eq:line_s} with their alternate formulation based on line loss \eqref{eq:v_loss} and voltage magnitude difference \eqref{eq:v_drop}.  Second, lift the model into the space of bus voltages and line currents (i.e. the $L$-space),
\begin{subequations}
\begin{align}
& W_{i} = |V_{i}|^2 \;\; i \in N \label{eq:w_link_1} \\
& L_{ij} = |\bm Y_{ij}|^2 (|V_i|^2  - V_iV_j^* - V_i^*V_j + |V_j|^2 ) \;\; \forall(i,j) \in E \label{eq:w_link_2} 
\end{align}
\end{subequations}
Third, use the absolute square of power property \eqref{eq:asp} to strengthen the $L$-space relaxation, as follows,
\begin{align}
& |S_{ij}|^2 = W_i L_{ij} \;\; \forall(i,j) \in E \label{eq:asp_l_ncvx}
\end{align}
This establishes the non-convex DF model.  The convex relaxation, CDF, is obtained by relaxing \eqref{eq:asp_l_ncvx} to an inequality \cite{6102366}, 
\begin{align}
& |S_{ij}|^2 \leq W_i L_{ij} \;\; \forall(i,j) \in E \label{eq:asp_l} 
\end{align}
Notice that constraint \eqref{eq:asp_l} is a second-order cone constraint.

\begin{model}[t]
\caption{The CDF Relaxation of AC Power Flow (CDF-PF)}
\label{model:cdf_pf}
\begin{subequations}
\begin{align}
\mbox{\bf variables:} \nonumber \\
& S^g_i \;\; \forall i\in N \nonumber \\
& W_i \;\; \forall i\in N \nonumber \\
& L_{ij} \;\; \forall (i,j)\in E \nonumber \\
& S_{ij} \;\; \forall (i,j)\in E \cup E^R \nonumber \\
\mbox{\bf subject to:} \nonumber \\
& \eqref{eq:ac_2}, \eqref{eq:ac_5}, \eqref{eq:ac_3}, \eqref{eq:soc_1} \nonumber \\ 
& \Im\left( W_i - \bm Z^*_{ij} S_{ij} \right) \leq \tan(\bm {\theta^\Delta}_{ij}) \Re\left( W_i - \bm Z^*_{ij} S_{ij} \right) \;\; \forall (i,j) \in E \label{eq:cdf_1_1}  \\
& \Im\left( W_i - \bm Z^*_{ij} S_{ij} \right) \geq \tan(- \bm {\theta^\Delta}_{ij}) \Re\left( W_i - \bm Z^*_{ij} S_{ij} \right) \;\; \forall (i,j) \in E \label{eq:cdf_1_2}  \\
& S_{ij} + S_{ji} = \bm Z_{ij} L_{ij}  \;\; (i,j)\in E \label{eq:cdf_2}  \\
& W_i - W_j = (\bm Z^*_{ij} S_{ij} + \bm Z_{ij} S_{ij}^*) -  |\bm Z_{ij}|^2 L_{ij}  \;\; (i,j)\in E \label{eq:cdf_3}  \\
& |S_{ij}|^2 \leq W_i L_{ij}  \;\; \forall (i,j) \in E \label{eq:cdf_4} 
\end{align}
\end{subequations}
\end{model}

The complete CDF relaxation of \refacpf~is presented in Model \ref{model:cdf_pf} \refcdfpf.
The constraints for KCL \eqref{eq:ac_3}, generator output limits \eqref{eq:ac_2}, and line flow limits \eqref{eq:ac_5} are identical to the AC-PF model and constraints \eqref{eq:soc_1} for the voltage bounds are identical to the \refsocpf~model.
Constraints \eqref{eq:cdf_2}--\eqref{eq:cdf_3} capture line power flow in the $L$-space.  
Constraints \eqref{eq:cdf_1_1}--\eqref{eq:cdf_1_2} capture the voltage angle difference constraints, along the lines of \eqref{eq:s_pad}.
Finally constraints \eqref{eq:cdf_4} strengthen the relaxation with the power-based second-order cone constraint.

\subsection{Equivalence of the Relaxations}

Interestingly, \cite{6483453} showed that the \refsocpf~and \refcdfpf~relaxations are equivalent.
The key insight is that a bijection between the solution sets of \refsocpf~and \refcdfpf~can be developed as follows.
Given a solution to the \refcdfpf~($W_i, L_{ij}, S_{ij}$), assign the \refsocpf~variables,
\begin{subequations}
\begin{align}
& W_{ij} = W_i - \bm Z^*_{ij} S_{ij} \;\; (i,j) \in E
\end{align}
\end{subequations}
Given a solution to the \refsocpf~($W_i, W_{ij}, S_{ij}$), assign the \refcdfpf~variables,
\begin{subequations}
\begin{align}
& L_{ij} = |\bm{Y}_{ij}|^2 \left( W_i - W_{ij} - W_{ij}^* + W_j  \right)  \;\; (i,j) \in E
\end{align}
\end{subequations}
Note that $W$ and $S$ remain the same in both models. In each of these assignments, all of the model constraints are satisfied, demonstrating the bijection.  See \cite{6897933,6483453} for a detailed proof.

\section{Extensions of the Power Flow Models}
\label{sec:ac_ex}

In the interest of clarity, AC Power Flows, and their relaxations, are most often presented on the purest version of the AC power flow equations.  However, industrial transmission system datasets include additional parameters such as bus shunts ($\bm {Y^s}$), asymmetric line charging ($\bm {Y^c}_{ij}, \bm {Y^c}_{ji}$), and transformers ($\bm T$), which complicate the AC power flow equations significantly.
This section extends the results of the previous sections to include these additional parameters.

We begin with the definitions of the network's physical properties.
Kirchhoff's Current Law (KCL) is extended to incorporate bus shunts as follows,
\begin{align}
& I^g_i - {\bm I^d_i} - {\bm Y^s_i} V_i = \sum_{\substack{(i,j)\in E \cup E^R}} I_{ij} \;\; \forall i \in N \label{eq:kcl_e}
\end{align}
Ohm's Law in extended to include transformers and line charging,
\begin{subequations}
\begin{align}
& I_{ij} = \left( \bm Y_{ij} + \bm {Y^c}_{ij} \right) \frac{V_i}{\bm T_{ij}} - \bm Y_{ij} V_j  \;\; \forall (i,j)\in E \label{eq:ohm_e} \\
& I_{ji} = \left( \bm Y_{ij} + \bm {Y^c}_{ji} \right) V_j - \bm Y_{ij} \frac{V_i}{\bm T_{ij}}  \;\; \forall (i,j)\in E \label{eq:ohm_e}
\end{align}
\end{subequations}
and the definition of AC power now includes the voltage transformation,
\begin{align}
& S_{ij} = \frac{V_i}{\bm T_{ij}} I_{ij}^*  \;\; \forall (i,j)\in E \label{eq:power_e}
\end{align}
Combining these three properties yields the extended AC Power Flow equations,
\begin{subequations}
\begin{align}
& S^g_i - {\bm S^d_i} - (\bm {Y^s}_i)^* |V_i|^2 = \sum_{\substack{(i,j)\in E \cup E^R}} S_{ij} \;\; \forall i\in N \label{eq:bus_s_e} \\ 
& S_{ij} = \left( \bm Y_{ij} + \bm {Y^c}_{ij} \right)^* \frac{|V_i|^2}{|\bm{T}_{ij}|^2} - \bm Y^*_{ij} \frac{V_iV^*_j}{\bm{T}_{ij}} \;\; (i,j)\in E \label{eq:line_s_e_from}\\
& S_{ji} = \left( \bm Y_{ij} + \bm {Y^c}_{ji} \right)^* |V_j|^2 - \bm Y^*_{ij} \frac{V^*_iV_j}{\bm{T}^*_{ij}} \;\; (i,j)\in E \label{eq:line_s_e_to}
\end{align}
\end{subequations}
The complete Extended AC Power Flow Feasibility Problem \refacepf~is presented in Model \ref{model:ac_pf_e}. 
The operational constraints \eqref{eq:ac_0}--\eqref{eq:ac_5} and \eqref{eq:ac_6} remain the same as \refacpf.  
Constraints \eqref{eq:ac_e_1} capture the extended KCL and constraints \eqref{eq:ac_e_2}--\eqref{eq:ac_e_3} capture the extended Ohm's Law.  

\begin{model}[t]
\caption{The Extended AC Power Flow Feasibility Problem (AC-E-PF)}
\label{model:ac_pf_e}
\begin{subequations}
\begin{align}
\mbox{\bf variables:} \nonumber \\
& \mbox{Variables from \refacpf} \nonumber \\
\mbox{\bf subject to:} \nonumber \\
& \mbox{ \eqref{eq:ac_0},\eqref{eq:ac_1},\eqref{eq:ac_2},\eqref{eq:ac_5},\eqref{eq:ac_6} } \nonumber \\
& S^g_i - {\bm S^d_i} - (\bm {Y^s}_i)^* |V_i|^2 = \sum_{\substack{(i,j)\in E \cup E^R}} S_{ij} \;\; \forall i\in N \label{eq:ac_e_1} \\ 
& S_{ij} = \left( \bm Y_{ij} + \bm {Y^c}_{ij} \right)^* \frac{|V_i|^2}{|\bm{T}_{ij}|^2} - \bm Y^*_{ij} \frac{V_iV^*_j}{\bm{T}_{ij}} \;\; (i,j)\in E \label{eq:ac_e_2}\\
& S_{ji} = \left( \bm Y_{ij} + \bm {Y^c}_{ji} \right)^* |V_j|^2 - \bm Y^*_{ij} \frac{V^*_iV_j}{\bm{T}^*_{ij}} \;\; (i,j)\in E   \label{eq:ac_e_3}
\end{align}
\end{subequations}
\end{model}

It is useful to observe that even in this extended formulation, the product of voltages can be factored in terms of the $V$ and $S$ variables, as done previously in \eqref{eq:vv_factor},
\begin{align}
& V_iV^*_j = \bm Z^*_{ij}  \bm{T}_{ij} \left( \left( \bm Y_{ij} + \bm {Y^c}_{ij} \right)^* \frac{|V_i|^2}{|\bm T_{ij}|^2} - S_{ij} \right) \;\; (i,j) \in E \label{eq:vv_factor_e}
\end{align}
This factorization is useful for extending properties such as the VAD constraints, along the lines of \eqref{eq:s_pad}, and for proving the equivalence of various relaxations.

\subsection{Extensions of the General Properties}
\label{sec:ac_prop_e}

Next the general network properties from Section \ref{sec:ac_prop} are redefined in this extended AC power flow context.

\noindent
\paragraph{Absolute Square of Ohm's Law}
From definition \eqref{eq:ohm_e}, the absolute square of Ohm's law is,
\begin{subequations}
\begin{align}
& I_{ij}I_{ij}^*  = \left(  \left( \bm Y_{ij} + \bm {Y^c}_{ij}  \right) \frac{V_i}{\bm T_{ij}} - \bm Y_{ij} V_j  \right) \left(  \left( \bm Y_{ij} - \bm {Y^c}_{ij} \right)^* \frac{V_i^*}{\bm T^*_{ij}} - \bm Y_{ij}^* V_j^*  \right)  \\
& \mbox{resulting in,} \nonumber \\
& |I_{ij}|^2 = |\bm Y_{ij}|^2 \left( \frac{|V_i|^2}{|\bm T_{ij}|^2} - \frac{V_iV_j^*}{\bm T_{ij}} - \frac{V_i^*V_j}{\bm T^*_{ij}} + |V_j|^2  \right) 
   + (\bm {Y^c}_{ij} S_{ij} +  \bm {Y^{c*}}_{ij} S^*_{ij})
   - |\bm {Y^c}_{ij}|^2 \frac{|V_i|^2}{|\bm T_{ij}|^2} 
\end{align}
\end{subequations}
A complete derivation appears in Appendix \ref{sec:prop_e_proof}.
\noindent
\paragraph{Absolute Square of AC Power}
From definition \eqref{eq:power_e}, the absolute square of the definition of ac power is,
\begin{subequations}
\begin{align}
& S_{ij}S_{ij}^* = \left( \frac{V_i}{\bm T_{ij}} I_{ij}^* \right) \left( \frac{V_i^*}{\bm T^*_{ij}} I_{ij} \right)  \\
& |S_{ij}|^2 = \frac{|V_i|^2}{|\bm T_{ij}|^2} |I_{ij}|^2 \label{eq:asp_e}
\end{align}
\end{subequations}
\noindent
\paragraph{Absolute Square of Voltage Products}
The absolute square of voltage products is,
\begin{subequations}
\begin{align}
& \left( \frac{V_i}{\bm T_{ij}} V_j^* \right) \left( \frac{V_i}{\bm T_{ij}} V_j^* \right)^* = \left( \frac{V_i}{\bm T_{ij}} V_j^* \right) \left( \frac{V_i^*}{\bm T^*_{ij}} V_j \right) \\ 
& \frac{|V_i  V_j^*|^2}{|\bm T_{ij}^*|^2} = \frac{|V_i|^2}{|\bm T_{ij}^*|^2} |V_j|^2 \\
& |V_iV_j^*|^2 = |V_i|^2 |V_j|^2
\end{align}
\end{subequations}
The derivation follows similarly to the previous section.  Interestingly, the transformer constants cancel and this property is unaffected by the model extensions.
\paragraph{Line Losses}
From the definitions \eqref{eq:line_s_e_from}--\eqref{eq:line_s_e_to}, the power loss on line $(i,j)$ is,
\begin{subequations}
\begin{align}
& S_{ij}+S_{ji} = \bm Y^*_{ij} \left( \frac{|V_i|^2}{|\bm T_{ij}|^2} - \frac{V_iV_j^*}{\bm T_{ij}} - \frac{V_i^*V_j}{\bm T^*_{ij}} + |V_j|^2  \right) +  \bm {Y^{c*}}_{ij} \frac{|V_i|^2}{|\bm T_{ij}|^2} + \bm {Y^{c*}}_{ji} |V_j|^2  \label{eq:v_loss_e}
\end{align}
\end{subequations}
\paragraph{Voltage Magnitude Difference}
From the definitions \eqref{eq:line_s_e_from}--\eqref{eq:line_s_e_to}, the extended voltage magnitude difference property is derived by subtracting the power on each end of a line $(i,j)$, solving for the $|V_i|^2,|V_j|^2$ variables, and eliminating the $S_{ji}$ term.
\begin{subequations}
\begin{align}
& S_{ij} - S_{ji}  =  \bm {Y^{c*}}_{ij} \frac{|V_i|^2}{|\bm{T}_{ij}|^2} - \bm {Y^{c*}}_{ji} |V_j|^2  + \bm Y^*_{ij} \left(\frac{|V_i|^2}{|\bm{T}_{ij}|^2} - |V_j|^2 \right) - \bm Y^*_{ij} \left( \frac{V_iV_j^*}{\bm{T}_{ij}} - \frac{V_i^*V_j}{\bm{T}^*_{ij}} \right)  \\
& \mbox{resulting in,} \nonumber \\
& \left(1 + \bm Z_{ij}\bm {Y^c}_{ij} + \bm Z^*_{ij}\bm {Y^{c*}}_{ij} \right) \frac{|V_i|^2}{|\bm T_{ij}|^2}-|V_j|^2 = (\bm Z^*_{ij} S_{ij} + \bm Z_{ij} S_{ij}^*) - \left( \frac{|V_i|^2}{|\bm T_{ij}|^2} - \frac{V_iV_j^*}{\bm T_{ij}} - \frac{V_i^*V_j}{\bm T^*_{ij}} + |V_j|^2 \right)  \label{eq:v_drop_e} 
\end{align}
\end{subequations}
A complete derivation appears in Appendix \ref{sec:prop_e_proof}.
\paragraph{Equivalence of Line Flow Formulations}
Despite these various extensions, the line loss and voltage magnitude difference constraints still provide an alternate formulation of the power flow constraints \eqref{eq:line_s_e_from}--\eqref{eq:line_s_e_to} namely,
\begin{align}
& \mbox{\eqref{eq:line_s_e_from},\eqref{eq:line_s_e_to}} \Leftrightarrow \eqref{eq:v_loss_e},\eqref{eq:v_drop_e} \nonumber 
\end{align}
which follows from the fact that these two sets of constraints are simply linear combinations of one another.

\subsection{The Extended SOC Relaxation}
\label{sec:soc_relax_e}

The Extended SOC relaxation of \refacepf~is presented in Model \ref{model:soc_pf_e} \refsocepf.
The relaxation is nearly identical to the simple version as the voltage variables $V$ and the voltage product property are unaffected by the model extensions.  
Simply lifting \refacepf~into the $W$-space and strengthening with the original second-order cone constraint \eqref{eq:soc_5} completes the relaxation.

\begin{model}[t]
\caption{The SOC Relaxation of Extended AC Power Flow (SOC-E-PF)}
\label{model:soc_pf_e}
\begin{subequations}
\begin{align}
\mbox{\bf variables:} \nonumber \\
& \mbox{Variables from \refsocpf } \nonumber \\
\mbox{\bf subject to:} \nonumber \\
& \eqref{eq:ac_2}, \eqref{eq:ac_5}, \eqref{eq:soc_1}, \eqref{eq:soc_2}, \eqref{eq:soc_5}  \nonumber \\ 
& S^g_i - {\bm S^d_i} - (\bm {Y^s}_i)^* W_i = \sum_{\substack{(i,j)\in E \cup E^R}} S_{ij} \;\; \forall i\in N \label{eq:soc_e_1} \\ 
& S_{ij} = \left( \bm Y_{ij} + \bm {Y^c}_{ij} \right)^* \frac{W_i}{|\bm{T}_{ij}|^2}  - \bm Y^*_{ij}\frac{W_{ij}}{\bm{T}_{ij}}  \;\; (i,j)\in E \label{eq:soc_e_2}  \\
& S_{ji} =  \left( \bm Y_{ij} + \bm {Y^c}_{ji} \right)^* W_j - \bm Y^*_{ij}\frac{W_{ij}^*}{\bm{T}^*_{ij}} \;\; (i,j)\in E \label{eq:soc_e_3} 
\end{align}
\end{subequations}
\end{model}

\subsection{The Extended Convex DistFlow Relaxation}
\label{sec:df_relax_e}

Thus far, applications of this DF model have typically focused on distribution systems and, to the best of our knowledge, the DF model has not been extended to capture line charging and transformers. Using Section \ref{sec:df_relax} as a guide, this section derives an extended DF model for AC transmission systems featuring bus shunts, line charging, and transformers.

First, we replace the line flow equations \eqref{eq:line_s_e_from}--\eqref{eq:line_s_e_to} with their alternate formulation based on line loss \eqref{eq:v_loss_e} and voltage magnitude difference \eqref{eq:v_drop_e}.  
Second, we lift the model into the $L$-space, as follows,
\begin{subequations}
\begin{align}
& W_{i} = |V_{i}|^2 \;\; i \in N \label{eq:w_link_e_1} \\
& L_{ij} = |\bm Y_{ij}|^2 \left( \frac{|V_i|^2}{|\bm T_{ij}|^2} - \frac{V_iV_j^*}{\bm T_{ij}} - \frac{V_i^*V_j}{\bm T^*_{ij}} + |V_j|^2  \right) 
  + (\bm {Y^c}_{ij} S_{ij} +   \bm {Y^{c*}}_{ij} S^*_{ij})
  - |\bm {Y^c}_{ij}|^2 \frac{|V_i|^2}{|\bm T_{ij}|^2} \;\; \forall(i,j) \in E \label{eq:w_link_e_2} 
\end{align}
\end{subequations}
Third, use the absolute square of power property \eqref{eq:asp_e} to strengthen the $L$-space relaxation,
\begin{align}
& |S_{ij}|^2 = \frac{W_i}{|\bm T_{ij}|^2} L_{ij} \;\; \forall(i,j) \in E \label{eq:asp_l_ncvx_e}
\end{align}
This establishes the non-convex extended DF model.  The convex relaxation, extended CDF, is obtained by relaxing \eqref{eq:asp_l_ncvx_e} to an inequality, 
\begin{align}
& |S_{ij}|^2 \leq \frac{W_i}{|\bm T_{ij}|^2} L_{ij} \;\; \forall(i,j) \in E \label{eq:asp_l_e}
\end{align}
Notice that constraint \eqref{eq:asp_l_e} is a second-order cone constraint as $|\bm T_{ij}|^2$ is a constant.

The complete extended CDF relaxation of \refacepf~is presented in Model \ref{model:cdf_pf_e} \refcdfepf.
The constraints for generator output limits \eqref{eq:ac_2} and line flow limits \eqref{eq:ac_5} are identical to the AC-PF model and the constraints for KCL \eqref{eq:soc_e_1} and \eqref{eq:soc_1} the voltage bounds are identical to the SOC-E-PF model.
Constraints \eqref{eq:cdf_e_3}--\eqref{eq:cdf_e_4} capture line power flow in the $L$-space.  
Constraints \eqref{eq:cdf_e_1}--\eqref{eq:cdf_e_2} capture the voltage angle difference constraints, utilizing \eqref{eq:vv_factor_e}.
Finally constraints \eqref{eq:cdf_e_5} strengthen the relaxation with a second-order cone constraint based on the extended absolute-square of power property.

\begin{model}[t]
\caption{The CDF Relaxation of Extended AC Power Flow (CDF-E-PF)}
\label{model:cdf_pf_e}
\begin{subequations}
\begin{align}
\mbox{\bf variables: \hspace{-0.0cm} } \nonumber \\
& \mbox{Variables from \refcdfpf }  \nonumber \\
\mbox{\bf subject to: \hspace{-0.0cm} } \nonumber \\
& \eqref{eq:ac_2}, \eqref{eq:ac_5}, \eqref{eq:soc_1}, \eqref{eq:soc_e_1}  \nonumber \\ 
& \tan(-\bm {\theta^\Delta}_{ij}) \Re\left( \bm Z^*_{ij}  \bm{T}^*_{ij} \left( \left( \bm Y_{ij} +  \bm {Y^c}_{ij} \right)^* \frac{W_i}{|\bm T_{ij}|^2} - S_{ij} \right) \right) \leq \nonumber \\ 
& \hspace{4.5cm} \Im\left( \bm Z^*_{ij}  \bm{T}^*_{ij} \left( \left( \bm Y_{ij} + \bm {Y^c}_{ij} \right)^* \frac{W_i}{|\bm T_{ij}|^2} - S_{ij} \right)  \right) \;\; \forall (i,j) \in E \label{eq:cdf_e_1}  \\
& \tan(\bm {\theta^\Delta}_{ij}) \Re\left( \bm Z^*_{ij}  \bm{T}^*_{ij} \left( \left( \bm Y_{ij} + \bm {Y^c}_{ij}  \right)^* \frac{W_i}{|\bm T_{ij}|^2} - S_{ij} \right) \right) \geq \nonumber \\ 
& \hspace{4.5cm} \Im\left( \bm Z^*_{ij}  \bm{T}^*_{ij} \left( \left( \bm Y_{ij} + \bm {Y^c}_{ij}  \right)^* \frac{W_i}{|\bm T_{ij}|^2} - S_{ij} \right)  \right) \;\; \forall (i,j) \in E \label{eq:cdf_e_2}  \\
& S_{ij}+S_{ji} = \bm Z_{ij} \left( L_{ij} - \bm {Y^c}_{ij} S_{ij} - \bm {Y^{c*}}_{ij} S^*_{ij} + |\bm {Y^c}_{ij}|^2 \frac{W_i}{|\bm T_{ij}|^2} \right) +  \bm {Y^{c*}}_{ij} \frac{W_i}{|\bm T_{ij}|^2} + \bm {Y^{c*}}_{ji} W_j \;\; \forall (i,j) \in E \label{eq:cdf_e_3}  \\
& \left(1 + \bm Z_{ij}\bm {Y^c}_{ij} + \bm Z^*_{ij}\bm {Y^{c*}}_{ij} \right) \frac{W_i}{|\bm T_{ij}|^2} - W_j = (\bm Z^*_{ij} S_{ij} + \bm Z_{ij} S^*_{ij})   \nonumber \\ 
& \hspace{4.5cm} - |\bm Z_{ij}|^2 \left( L_{ij} - \bm {Y^c}_{ij} S_{ij} - \bm {Y^{c*}}_{ij} S^*_{ij} + |\bm {Y^c}_{ij}|^2 \frac{W_i}{|\bm T_{ij}|^2} \right) \;\; \forall (i,j) \in E \label{eq:cdf_e_4} \\
& |S_{ij}|^2 \leq \frac{W_i}{|\bm T_{ij}|^2} L_{ij}  \;\; \forall (i,j) \in E \label{eq:cdf_e_5} 
\end{align}
\end{subequations}
\end{model}

\subsection{Equivalence of the Relaxations}
\label{sec:equiv}

Despite the significant increase in complexity, the \refsocepf~and \refcdfepf~are equivalent convex relaxations of \refacepf, as demonstrated in this section.

\begin{theorem}
\label{theorem:main}
\refcdfepf~is equivalent to \refsocepf.
\end{theorem}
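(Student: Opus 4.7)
The plan is to construct an explicit bijection between the feasible sets of \refsocepf\ and \refcdfepf, following exactly the template of the simple-case proof sketched at the end of Section~\ref{sec:df_relax}. The bijection fixes the common variables $S^g$, $W$, and $S$; it differs only in whether the lifted voltage-product variables $W_{ij}$ or the lifted current-magnitude variables $L_{ij}$ are retained.

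\textbf{Map 1 (CDF $\to$ SOC).} Given a \refcdfepf\ solution $(S^g, W, L, S)$, define
\[
W_{ij} := \bm Z^*_{ij}\bm T^*_{ij}\!\left(\!\left(\bm Y^*_{ij} - \bm i\frac{\bm{b^c}_{ij}}{2}\right)\!\frac{W_i}{|\bm T_{ij}|^2} - S_{ij}\right), \quad (i,j) \in E.
\]
This is motivated by \eqref{eq:vv_factor_e}: if the CDF solution corresponds to an actual physical voltage profile, $W_{ij}$ equals $V_iV_j^*$. Under this substitution, the SOC-E-PF PAD constraints \eqref{eq:soc_2} reduce exactly to \eqref{eq:cdf_e_1}--\eqref{eq:cdf_e_2}, and the ``from'' line-flow equation \eqref{eq:soc_e_2} is an immediate algebraic rearrangement of the definition of $W_{ij}$. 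The ``to'' line-flow equation \eqref{eq:soc_e_3} then follows because the pair \eqref{eq:cdf_e_3}--\eqref{eq:cdf_e_4} is, by the equivalence of line-flow formulations proved in Section~\ref{sec:ac_prop_e}, simply a linear reformulation of \eqref{eq:line_s_e_from}--\eqref{eq:line_s_e_to}.

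\textbf{Map 2 (SOC $\to$ CDF).} Given a \refsocepf\ solution $(S^g, W, W_{\cdot\cdot}, S)$, define $L_{ij}$ by the extended lift \eqref{eq:w_link_e_2}:
\[
L_{ij} := |\bm Y_{ij}|^2\!\left(\frac{W_i}{|\bm T_{ij}|^2} - \frac{W_{ij}}{\bm T^*_{ij}} - \frac{W^*_{ij}}{\bm T_{ij}} + W_j\right) - \left(\frac{\bm{b^c}_{ij}}{2}\right)^{\!2}\!\frac{W_i}{|\bm T_{ij}|^2} - \bm{b^c}_{ij}\,\Im(S_{ij}).
\]
The CDF-E-PF constraints \eqref{eq:cdf_e_3} and \eqref{eq:cdf_e_4} then follow by adding (respectively subtracting) \eqref{eq:soc_e_2} and \eqref{eq:soc_e_3} and substituting this value of $L_{ij}$; these are the very linear combinations that produced \eqref{eq:v_loss_e}--\eqref{eq:v_drop_e} at the $V$-level. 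The CDF-E-PF PAD constraints \eqref{eq:cdf_e_1}--\eqref{eq:cdf_e_2} follow from \eqref{eq:soc_2} by solving \eqref{eq:soc_e_2} for $W_{ij}$, which yields precisely the $\bm Z^*_{ij}\bm T^*_{ij}(\cdots)$ expression appearing inside the $\Re$ and $\Im$ operators of \eqref{eq:cdf_e_1}--\eqref{eq:cdf_e_2}.

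\textbf{Main obstacle.} The crux is showing that the two second-order cone constraints agree under the substitutions, i.e.,
\[
|W_{ij}|^2 \leq W_iW_j \quad\Longleftrightarrow\quad |S_{ij}|^2 \leq \frac{W_i}{|\bm T_{ij}|^2}L_{ij}.
\]
In the simple case this collapses to the clean identity $W_iL_{ij} - |S_{ij}|^2 = |\bm Y_{ij}|^2\bigl(W_iW_j - |W_{ij}|^2\bigr)$, obtained by direct expansion. I expect the extended case to yield the analogous identity
\[
\frac{W_i}{|\bm T_{ij}|^2}L_{ij} - |S_{ij}|^2 \;=\; \frac{|\bm Y_{ij}|^2}{|\bm T_{ij}|^2}\bigl(W_iW_j - |W_{ij}|^2\bigr),
\]
from which the equivalence of the cone constraints is immediate since $|\bm Y_{ij}|^2/|\bm T_{ij}|^2 \geq 0$. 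The conceptual reason to expect this identity is that the voltage-space relation $|V_iV_j^*|^2 = |V_i|^2|V_j|^2$ is unaffected by the model extension (as noted in the Absolute Square of Voltage Products subsection), and both lifts are derived from the same underlying $V$-quantities. To verify, I would substitute Map~1 into $|W_{ij}|^2$, expand $\frac{W_i}{|\bm T_{ij}|^2}L_{ij}$ using Map~2, and check that all cross terms involving $\bm{b^c}_{ij}\Im(S_{ij})$, $\bm Z^*_{ij}S_{ij}$, and the transformer factors cancel. These cancellations are expected precisely because the lifts \eqref{eq:vv_factor_e} and \eqref{eq:w_link_e_2} are engineered from the same Ohm's-law and AC-power definitions. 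Once the identity is established, both directions of the bijection preserve the SOC cone, and the theorem follows.
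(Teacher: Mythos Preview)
Your proposal is correct and follows the same bijection as the paper: the two maps you define are exactly \eqref{eq:soc_cdf} and \eqref{eq:socp_cdf_l}, and your treatment of the linear constraints (PAD, line-flow, loss, voltage-drop) matches the paper's, though the paper makes the ``to'' flow verification explicit by deriving the auxiliary relation \eqref{eq:cdf_e_prop} rather than invoking the linear-equivalence principle.

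The one genuine difference is your handling of the cone constraints. The paper proves the two cone implications separately: in each direction it expands the relevant squared quantity, applies the \emph{other} model's cone as an inequality, and then collapses the remaining terms using the voltage-drop relation. Your single identity
\[
\frac{W_i}{|\bm T_{ij}|^2}L_{ij} - |S_{ij}|^2 \;=\; \frac{|\bm Y_{ij}|^2}{|\bm T_{ij}|^2}\bigl(W_iW_j - |W_{ij}|^2\bigr)
\]
does hold (it drops out immediately if one subtracts the paper's equation (5.15d) from the $W$-space voltage-drop relation \eqref{eq:cdf_e_4}), and it gives both cone directions at once with a nonnegative proportionality constant. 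This is a modest streamlining: same ingredients, but packaged as one equality rather than two chained inequalities.
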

\begin{proof}

The proof is similar in spirit to those presented in \cite{6483453,6897933} and develops a bijection to establish that any solution of one model can be mapped to a solution of the other model.

We begin by observing that the constraints \eqref{eq:ac_2}, \eqref{eq:ac_5}, \eqref{eq:soc_1}, \eqref{eq:soc_e_1} are included in both models and need not be considered.  
Hence, this result focuses on the voltage angle difference, power flow, and second-order cone constraints in each model.  Noticing that both models share the variables $W_i, S_{ij}, S_{ji}$, the result assumes these values are unchanged when mapping solutions between the to models.

\subsubsection*{\refcdfepf~$\Rightarrow$ \refsocepf}
That is, every solution to \refcdfepf~is a solution to \refsocepf. 
This section will demonstrate that given a solution to the \refcdfepf~$W_i$, $ L_{ij}$, $S_{ij}$, $S_{ji}$ the assignment of the \refsocepf~variables as follows,
\begin{subequations}
\begin{align}
&  W_{ij} =  \bm{T}_{ij} \bm Z^*_{ij} \left( \left( \bm Y_{ij} + \bm {Y^c}_{ij} \right)^* \frac{W_i}{|\bm T_{ij}|^2} - S_{ij}  \right)  \;\; (i,j) \in E \label{eq:soc_cdf}
\end{align}
\end{subequations}
yields a feasible solution to the \refsocepf~model, i.e. all of the model constraints are satisfied.  
The constraints of interest are the voltage angle difference constraints \eqref{eq:soc_2}, the power flow constraints \eqref{eq:soc_e_2}--\eqref{eq:soc_e_3} and the second-order cone constraints \eqref{eq:soc_5}.

The voltage angle difference constraints \eqref{eq:soc_2} are satisfied because mapping function \eqref{eq:soc_cdf} is simply the $W$-space version of the linear relation \eqref{eq:vv_factor_e}, which was originally used to transform \eqref{eq:soc_2} into \eqref{eq:cdf_e_1}--\eqref{eq:cdf_e_2}.

It is easy to see that the power flow constraints \eqref{eq:soc_e_2} are satisfied.  Simply substituting \eqref{eq:soc_cdf} into \eqref{eq:soc_e_2} yields a tautology.  The constraints \eqref{eq:soc_e_3} are more interesting.  First we must observe that the line loss and voltage magnitude difference constraints, \eqref{eq:cdf_e_3}--\eqref{eq:cdf_e_4}, jointly ensure the following property,
\begin{subequations}
\begin{align}
& \bm Z_{ij} S_{ij}^* = \bm Z_{ij}^* S_{ji} + \left(1 + \bm Z_{ij}\bm {Y^c}_{ij} \right) \frac{W_i}{|\bm T_{ij}|^2} - \left(1 + \bm Z^*_{ij}\bm {Y^{c*}}_{ji} \right) W_j  \label{eq:cdf_e_prop}.
\end{align}
\end{subequations}
A complete derivation is presented in Appendix \ref{sec:prop_cdf_e_proof}.  Combing  \eqref{eq:soc_cdf} and \eqref{eq:cdf_e_prop} with the definition of constraint \eqref{eq:soc_e_3} yields a tautology as follows,
\begin{subequations}
\begin{align}
& S_{ji} = \left( \bm Y^*_{ij} + \bm Y^{c*}_{ji} \right) W_j -  \frac{\bm Y^*_{ij}}{\bm{T}^*_{ij}} W^*_{ij} \\
& S_{ji} = \left( \bm Y^*_{ij} + \bm Y^{c*}_{ji} \right) W_j - \frac{\bm Y^*_{ij}}{\bm{T}^*_{ij}} \left( \bm Z_{ij}  \bm{T}^*_{ij} \left( \left( \bm Y_{ij} + \bm {Y^c}_{ij} \right) \frac{W_i}{|\bm T_{ij}|^2} - S^*_{ij} \right) \right) \\
& S_{ji} = \left( \bm Y^*_{ij} + \bm Y^{c*}_{ji} \right) W_j - \bm Y^*_{ij} \frac{W_i}{|\bm{T}_{ij}|^2} - \bm Y_{ij}^* \bm Z_{ij} \bm {Y^c}_{ij} \frac{W_i}{|\bm T_{ij}|^2} + \bm Y_{ij}^* \bm Z_{ij} S^*_{ij} \\
& S_{ji} = \left( \bm Y^*_{ij} + \bm Y^{c*}_{ji} \right) W_j - \left( \bm Y^*_{ij} + \bm Y_{ij}^* \bm Z_{ij} \bm {Y^c}_{ij} \right) \frac{W_i}{|\bm T_{ij}|^2} \nonumber \\ 
& + \bm Y_{ij}^* \left( \bm Z_{ij}^* S_{ji} + \left(1 + \bm Z_{ij}\bm {Y^c}_{ij} \right) \frac{W_i}{|\bm T_{ij}|^2} - \left(1 + \bm Z^*_{ij}\bm {Y^{c*}}_{ji} \right) W_j  \right) \\
& S_{ji} = S_{ji} 
\end{align}
\end{subequations}
Thus demonstrating that the constraints \eqref{eq:soc_e_3} are satisfied.

Finally we consider the second-order cone constraints \eqref{eq:soc_5}.
We begin by developing the value of $|W_{ij}|^2$, given the variable assignment we have selected as follows,
\begin{subequations}
\begin{align}
& |W_{ij}|^2 = \left( \left(1 + \bm Z^*_{ij} \bm Y^{c*}_{ij} \right) \frac{W_i}{\bm T^*_{ij}} - \bm T_{ij} \bm Z^*_{ij} S_{ij} \right)
\left( \left(1 + \bm Z_{ij} \bm Y^{c}_{ij} \right) \frac{W_i}{\bm T_{ij}} - \bm T^*_{ij} \bm Z_{ij} S^*_{ij} \right) \\
&  |W_{ij}|^2 = \left(1 + \bm Z_{ij} \bm Y^{c}_{ij} + \bm Z^*_{ij} \bm Y^{c*}_{ij} + |\bm Z_{ij}|^2 |\bm Y^{c}_{ij}|^2 \right) \frac{|W_i|^2}{|\bm T_{ij}|^2} \nonumber \\
& -  \left(1 + \bm Z^*_{ij} \bm Y^{c*}_{ij} \right) \bm Z_{ij} S^*_{ij} W_i 
  - \left(1 + \bm Z_{ij} \bm Y^{c}_{ij} \right) \bm Z^*_{ij} S_{ij} W_i
  + |\bm T_{ij}|^2 |\bm Z_{ij}|^2 |S_{ij}|^2 \\
&  |W_{ij}|^2 = |\bm Z_{ij}|^2 \left( |\bm T_{ij}|^2 |S_{ij}|^2 - \bm Y^{c*}_{ij} S_{ij} W_i - \bm Y^{c}_{ij}S^*_{ij} W_i + |\bm Y^{c*}_{ij}|^2 \frac{W_i^2}{|\bm T_{ij}|^2} \right) \nonumber \\
& - \bm Z_{ij} S^*_{ij} W_i - \bm Z^*_{ij} S_{ij} W_i + \left(1 + \bm Z_{ij} \bm Y^{c}_{ij} + \bm Z^*_{ij} \bm Y^{c*}_{ij} \right)  \frac{W_i^2}{|\bm T_{ij}|^2} \\
&  |W_{ij}|^2 = W_i |\bm Z_{ij}|^2 \left( \frac{|\bm T_{ij}|^2 |S_{ij}|^2}{W_i} - \bm Y^{c}_{ij}S^*_{ij} - \bm Y^{c*}_{ij} S_{ij} + |\bm Y^{c*}_{ij}|^2 \frac{W_i}{|\bm T_{ij}|^2} \right) \nonumber \\
& + W_i \left( - \bm Z_{ij} S^*_{ij} - \bm Z^*_{ij} S_{ij} + \left(1 + \bm Z_{ij} \bm Y^{c}_{ij} + \bm Z^*_{ij} \bm Y^{c*}_{ij} \right)  \frac{W_i}{|\bm T_{ij}|^2} \right) \\
&  |W_{ij}|^2 \leq W_i |\bm Z_{ij}|^2 \left( L_{ij} - \bm Y^{c}_{ij}S^*_{ij} - \bm Y^{c*}_{ij} S_{ij} + |\bm Y^{c*}_{ij}|^2 \frac{W_i}{|\bm T_{ij}|^2} \right) \nonumber \\
& + W_i \left( - \bm Z_{ij} S^*_{ij} - \bm Z^*_{ij} S_{ij} + \left(1 + \bm Z_{ij} \bm Y^{c}_{ij} + \bm Z^*_{ij} \bm Y^{c*}_{ij} \right)  \frac{W_i}{|\bm T_{ij}|^2} \right)
\end{align}
\end{subequations}
From this point, we apply the second-order cone constraint from \refcdfepf, i.e.~\eqref{eq:cdf_e_5}, to eliminate the $|S_{ij}|^2$ variable and then apply the voltage magnitude difference equation \eqref{eq:cdf_e_4} to simplify the large expression to $W_j$,
\begin{subequations}
\begin{align}
 %
 &  |W_{ij}|^2 \leq W_i \left( |\bm Z_{ij}|^2 \left( L_{ij} - \bm Y^{c}_{ij}S^*_{ij} - \bm Y^{c*}_{ij} S_{ij} + |\bm Y^{c*}_{ij}|^2 \frac{W_i}{|\bm T_{ij}|^2} \right) - \bm Z_{ij} S^*_{ij} - \bm Z^*_{ij} S_{ij} + \left(1 + \bm Z_{ij} \bm Y^{c}_{ij} + \bm Z^*_{ij} \bm Y^{c*}_{ij} \right)  \frac{W_i}{|\bm T_{ij}|^2} \right) \\
&  |W_{ij}|^2 \leq W_i W_j
\end{align}
\end{subequations}
Demonstrating that constraint \eqref{eq:soc_5} is satisfied in \refsocepf.
With all of the constraints in \refsocepf~satisfied, the proof continues by performing a similar analysis in the reverse direction.

\subsubsection*{\refsocepf~$\Rightarrow$ \refcdfepf}
That is, every solution to \refsocepf~is a solution to \refcdfepf.  This section will demonstrate that given a solution to the \refsocepf~$W_i, W_{ij}, S_{ij}, S_{ji}$ the assignment of the \refcdfepf~variables,
\begin{subequations}
\begin{align}
& L_{ij} = |\bm Y_{ij}|^2 \left( \frac{W_i}{|\bm T_{ij}|^2} - \frac{W_{ij}}{\bm T_{ij}} - \frac{W_{ij}^*}{\bm T^*_{ij}} + W_j  \right) 
  + \bm {Y^c}_{ij} S_{ij} +   \bm {Y^{c*}}_{ij} S^*_{ij} 
  - |\bm {Y^c}_{ij}|^2 \frac{|V_i|^2}{|\bm T_{ij}|^2}  \;\; \forall(i,j) \in E   \label{eq:socp_cdf_l}
\end{align}
\end{subequations}
produces a feasible solution to the \refcdfepf~model, i.e. all of the model constraints are satisfied.  
The constraints of interest are the voltage angle difference constraints \eqref{eq:cdf_e_1}--\eqref{eq:cdf_e_2}, the line loss and voltage magnitude difference constraints \eqref{eq:cdf_e_3}--\eqref{eq:cdf_e_4} and second-order cone constraints \eqref{eq:cdf_e_5}.

The voltage angle difference constraints \eqref{eq:cdf_e_1}--\eqref{eq:cdf_e_2} are stratified because the constraint \eqref{eq:soc_e_2} is simply the $W$-space version of the linear relation \eqref{eq:vv_factor_e}, which was originally used to transform \eqref{eq:soc_2} in to \eqref{eq:cdf_e_1}--\eqref{eq:cdf_e_2}.

Continuing with line loss constraint \eqref{eq:cdf_e_3}, we simply expand the value of $L_{ij}$ as defined in \eqref{eq:socp_cdf_l} as follows,
\begin{subequations}
\begin{align}
& S_{ij}+S_{ji} = \bm Z_{ij} \left( L_{ij} - \bm {Y^c}_{ij} S_{ij} - \bm {Y^{c*}}_{ij} S^*_{ij} + |\bm {Y^c}_{ij}|^2 \frac{W_i}{|\bm T_{ij}|^2}  \right) +  \bm {Y^{c*}}_{ij} \frac{W_i}{|\bm T_{ij}|^2} + \bm {Y^{c*}}_{ji} W_j \\
&  S_{ij} +  S_{ji} = 
\bm Y^*_{ij} \left( \frac{W_i}{ |\bm{T}_{ij}|^2} - \frac{W_{ij}}{\bm{T}_{ij}} - \frac{W^*_{ij}}{\bm{T}^*_{ij}} + W_j \right)
+  \bm {Y^{c*}}_{ij} \frac{W_i}{|\bm T_{ij}|^2} + \bm {Y^{c*}}_{ji} W_j 
\end{align}
\end{subequations}
Noticing that this is the extended line loss property \eqref{eq:v_loss_e} in the $W$-space demonstrates that the constraints hold.
Next, the voltage magnitude difference constraint \eqref{eq:cdf_e_4} is expanded in a similar way,
\begin{subequations}
\begin{align}
& \left(1 + \bm Z_{ij}\bm {Y^c}_{ij} + \bm Z^*_{ij}\bm {Y^{c*}}_{ij} \right) \frac{W_i}{|\bm T_{ij}|^2} - W_j = \bm Z^*_{ij} S_{ij} + \bm Z_{ij} S^*_{ij}  - |\bm Z_{ij}|^2 \left( L_{ij} + |\bm {Y^c}_{ij}|^2 \frac{W_i}{|\bm T_{ij}|^2} - \bm {Y^c}_{ij} S_{ij} - \bm {Y^{c*}}_{ij} S^*_{ij} \right)  \\
& \left(1 + \bm Z_{ij}\bm {Y^c}_{ij} + \bm Z^*_{ij}\bm {Y^{c*}}_{ij} \right) \frac{W_i}{|\bm T_{ij}|^2} - W_j = \bm Z^*_{ij} S_{ij} + \bm Z_{ij} S^*_{ij} -  |\bm Z_{ij}|^2 \left(  \frac{W_i}{ |\bm{T}_{ij}|^2} - \frac{W_{ij}}{\bm{T}_{ij}} - \frac{W^*_{ij}}{\bm{T}^*_{ij}} + W_j \right)
\end{align}
\end{subequations}
Noticing that this is simply the extended voltage magnitude difference property \eqref{eq:v_drop_e} in the $W$-space demonstrates that these constraints hold.

Finally we consider the second-order cone constraints \eqref{eq:cdf_e_5}.
We begin by developing the value of $|S_{ij}|^2$, given the model constraints as follows,
\begin{subequations}
\begin{align}
& |S_{ij}|^2 = 
 \left( \left( \bm Y_{ij} + \bm {Y^c}_{ij} \right)^* \frac{W_i}{|\bm{T}_{ij}|^2} - \bm Y^*_{ij} \frac{W_{ij}}{\bm{T}_{ij}} \right)
 \left( \left( \bm Y_{ij} + \bm {Y^c}_{ij} \right) \frac{W^*_i}{|\bm{T}_{ij}|^2} - \bm Y_{ij} \frac{W^*_{ij}}{\bm{T}^*_{ij}} \right) \\
& |S_{ij}|^2 = \left( \bm Y^*_{ij} + \bm {Y^{c*}}_{ij} \right) \left( \bm Y_{ij} + \bm {Y^c}_{ij} \right) \frac{|W_i|^2}{|\bm{T}_{ij}|^4}
- \bm Y_{ij} \left( \bm Y^*_{ij} + \bm {Y^{c*}}_{ij} \right) \frac{W_i}{|\bm{T}_{ij}|^2} \frac{W^*_{ij}}{\bm{T}^*_{ij}}
- \bm Y^*_{ij} \left( \bm Y_{ij} + \bm {Y^{c}}_{ij} \right) \frac{W^*_i}{|\bm{T}_{ij}|^2} \frac{W_{ij}}{\bm{T}_{ij}} \nonumber \\
& + |\bm Y_{ij}|^2 \frac{|W_{ij}|^2}{|\bm{T}_{ij}|^2} \\
& |S_{ij}|^2 =  |\bm Y_{ij}|^2 \left( \frac{|W_i|^2}{|\bm{T}_{ij}|^4} - \frac{W_i}{|\bm{T}_{ij}|^2} \frac{W^*_{ij}}{\bm{T}^*_{ij}} - \frac{W^*_i}{|\bm{T}_{ij}|^2} \frac{W_{ij}}{\bm{T}_{ij}} + \frac{|W_{ij}|^2}{|\bm{T}_{ij}|^2} \right) \nonumber \\
& + \bm Y_{ij} \bm {Y^{c*}}_{ij} \frac{|W_i|^2}{|\bm{T}_{ij}|^4} - \bm Y_{ij} \bm {Y^{c*}}_{ij} \frac{W_i}{|\bm{T}_{ij}|^2} \frac{W^*_{ij}}{\bm{T}^*_{ij}}
+ \bm Y^*_{ij} \bm {Y^{c}}_{ij}  \frac{|W_i|^2}{|\bm{T}_{ij}|^4} - \bm Y^*_{ij} \bm {Y^{c}}_{ij}  \frac{W^*_i}{|\bm{T}_{ij}|^2} \frac{W_{ij}}{\bm{T}_{ij}}
+ |\bm {Y^c}_{ij}|^2 \frac{|W_{i}|^2}{|\bm{T}_{ij}|^4}
\end{align}
\end{subequations}
At this point the expression $|\bm {Y^c}_{ij}|^2 |W_{i}|^2 / |\bm{T}_{ij}|^4 - |\bm {Y^c}_{ij}|^2 |W_{i}|^2 / |\bm{T}_{ij}|^4$ is introduced so equation \eqref{eq:soc_e_2} can be leveraged to introduce the $S_{ij}$ variables as follows,
\begin{subequations}
\begin{align}
& |S_{ij}|^2 =  |\bm Y_{ij}|^2 \left( \frac{|W_i|^2}{|\bm{T}_{ij}|^4} - \frac{W_i}{|\bm{T}_{ij}|^2} \frac{W^*_{ij}}{\bm{T}^*_{ij}} - \frac{W^*_i}{|\bm{T}_{ij}|^2} \frac{W_{ij}}{\bm{T}_{ij}} + \frac{|W_{ij}|^2}{|\bm{T}_{ij}|^2} \right) \nonumber \\
& + \bm {Y^{c*}}_{ij} \frac{W_i}{|\bm{T}_{ij}|^2}  \left( ( \bm Y_{ij} + \bm {Y^{c}}_{ij}) \frac{W^*_i}{|\bm{T}_{ij}|^2} - \bm Y_{ij} \frac{W^*_{ij}}{\bm{T}^*_{ij}} \right)
+ \bm {Y^{c}}_{ij}  \frac{W_i}{|\bm{T}_{ij}|^2} \left( ( \bm Y^*_{ij} + \bm {Y^{c*}}_{ij}) \frac{W_i}{|\bm{T}_{ij}|^2} - \bm Y^*_{ij} \frac{W_{ij}}{\bm{T}_{ij}} \right) \nonumber \\
& - |\bm {Y^c}_{ij}|^2 \frac{|W_{i}|^2}{|\bm{T}_{ij}|^4} \\
& |S_{ij}|^2 =  |\bm Y_{ij}|^2 \left( \frac{|W_i|^2}{|\bm{T}_{ij}|^4} - \frac{W_i}{|\bm{T}_{ij}|^2} \frac{W^*_{ij}}{\bm{T}^*_{ij}} - \frac{W_i}{|\bm{T}_{ij}|^2} \frac{W_{ij}}{\bm{T}_{ij}} + \frac{|W_{ij}|^2}{|\bm{T}_{ij}|^2} \right) 
+ \bm {Y^{c*}}_{ij} \frac{W_i}{|\bm{T}_{ij}|^2} S^*_{ij} + \bm {Y^{c}}_{ij}  \frac{W^*_i}{|\bm{T}_{ij}|^2} S_{ij} - |\bm {Y^c}_{ij}|^2 \frac{|W_{i}|^2}{|\bm{T}_{ij}|^4} 
\end{align}
\end{subequations}
From this point a factor of $W_i / |\bm{T}_{ij}|^2$ can be collected from all of the terms.  However, to extract this term from $|W_{ij}|^2 / |\bm{T}_{ij}|^2$ we apply the SOC constraint from \refsocepf, i.e.~\eqref{eq:soc_5}, which turns the equation into an inequality.  Lastly the assignment of $L_{ij}$ \eqref{eq:socp_cdf_l} is used to complete the derivation as follows,
\begin{subequations}
\begin{align}
& |S_{ij}|^2 \leq \frac{W_i}{|\bm{T}_{ij}|^2} \left( |\bm Y_{ij}|^2 \left( \frac{W_i}{|\bm{T}_{ij}|^2} - \frac{W^*_{ij}}{\bm{T}^*_{ij}} - \frac{W_{ij}}{\bm{T}_{ij}} + W_j \right) 
+ ( \bm {Y^{c*}}_{ij} S^*_{ij} + \bm {Y^{c}}_{ij} S_{ij} ) - |\bm {Y^c}_{ij}|^2 \frac{W_{i}}{|\bm{T}_{ij}|^2} \right) \\
&  |S_{ij}|^2 \leq \frac{W_i}{ |\bm{T}_{ij}|^2} L_{ij}
\end{align}
\end{subequations}
Demonstrating that constraint \eqref{eq:cdf_e_5} is satisfied in \refcdfepf, and completing the proof that these two extended relaxations are equivalent.
\end{proof}

\section{Conclusion}
\label{sec:conc}

This paper increases the applicability of the established Convex-DistFlow (CDF) relaxation of the power flow equations by proposing an extended CDF model,  which is applicable to industrial transmission system datasets.  Additionally, it was shown that this extended CDF model defines the same convex set as the well known SOC relaxation for transmission systems.
Given that these relaxations define the same set of solutions, the natural frontier for future work is to conduct a detailed numerical study of these two relaxations.
Although the relaxation quality may be identical, significant differences in the formulations may lead to variations in the computation time of the numerical methods used to solve these equations.

\section{Acknowledgements}

This work was partially funded by NICTA, which was supported by the Australian Government through the Department of Communications and the Australian Research Council through the ICT Centre of Excellence Program.

\bibliographystyle{plain}
\bibliography{power_models}

LA-UR-17-25214

\clearpage
\appendix

\section{Complete Derivation of the Extended Properties}
\label{sec:prop_e_proof}

This appendix provides detailed derivations of a variety of general power flow properties in the extended AC-PF model, which are proposed in Section \ref{sec:ac_prop_e}.
Note that these derivations make use of the following properties of complex numbers,
\begin{subequations}
\begin{align}
& X + X^* = 2\Re(X) = 2\Re(X^*) \nonumber \\
& X - X^* = \bm i 2\Im(X) = -\bm i 2\Im(X^*)  \nonumber 
\end{align}
\end{subequations}
and leverage \eqref{eq:line_s_e_from}-\eqref{eq:line_s_e_to} to simplify the presentation using $S_{ij}$.

\paragraph{Extended Absolute Square of Current}
\begin{subequations}
\begin{align}
& I_{ij}I_{ij}^*  = \left(  \left( \bm Y_{ij} + \bm {Y^c}_{ij} \right) \frac{V_i}{\bm T_{ij}} - \bm Y_{ij} V_j  \right) \left(  \left( \bm Y_{ij} + \bm {Y^c}_{ij} \right)^* \frac{V_i^*}{\bm T^*_{ij}} - \bm Y_{ij}^* V_j^*  \right)  \\
& I_{ij}I_{ij}^*  = \left( \bm Y_{ij} + \bm {Y^c}_{ij} \right) \left( \bm Y_{ij} + \bm {Y^c}_{ij} \right)^* \frac{|V_i|^2}{|\bm T_{ij}|^2} - \bm Y^*_{ij} (\bm Y_{ij} + \bm {Y^c}_{ij}) \frac{V_i V_j^*}{\bm T_{ij}} - \bm Y_{ij} (\bm Y_{ij} + \bm {Y^c}_{ij})^* \frac{V^*_i V_j}{\bm T^*_{ij}} +  |\bm Y_{ij}|^2 |V_j|^2  \\
& I_{ij}I_{ij}^*  = |\bm Y_{ij}|^2 \left( \frac{|V_i|^2}{|\bm T_{ij}|^2} - \frac{V_i V^*_j}{\bm T_{ij}} - \frac{V^*_i V_j}{\bm T^*_{ij}} - |V_j|^2 \right) \nonumber \\
& + |\bm {Y^c}_{ij}|^2 \frac{|V_i|^2}{|\bm T_{ij}|^2} + \bm {Y^c}_{ij} Y^*_{ij} \frac{|V_i|^2}{|\bm T_{ij}|^2} - \bm {Y^c}_{ij} \bm Y^*_{ij} \frac{V_i V^*_j}{\bm T_{ij}}
+ \bm {Y^{c*}}_{ij} Y_{ij} \frac{|V_i|^2}{|\bm T_{ij}|^2} - \bm {Y^{c*}}_{ij} \bm Y_{ij} \frac{V^*_i V_j}{\bm T^*_{ij}}
\end{align}
Now $\bm {Y^c}_{ij} |V_i|^2/|\bm{T}_{ij}|^2 - \bm {Y^c}_{ij} |V_i|^2/|\bm{T}_{ij}|^2$ is introduced to generate $S_{ij}$ terms as follows,
\begin{align}
& I_{ij}I_{ij}^*  = |\bm Y_{ij}|^2 \left( \frac{|V_i|^2}{|\bm T_{ij}|^2} - \frac{V_i V^*_j}{\bm T_{ij}} - \frac{V^*_i V_j}{\bm T^*_{ij}} - |V_j|^2 \right) \nonumber \\
& + |\bm {Y^c}_{ij}|^2 \frac{|V_i|^2}{|\bm T_{ij}|^2} + \bm {Y^c}_{ij} Y^*_{ij} \frac{|V_i|^2}{|\bm T_{ij}|^2} - \bm {Y^c}_{ij} \bm Y^*_{ij} \frac{V_i V^*_j}{\bm T_{ij}}
+ |\bm {Y^c}_{ij}|^2 \frac{|V_i|^2}{|\bm T_{ij}|^2}  + \bm {Y^{c*}}_{ij} Y_{ij} \frac{|V_i|^2}{|\bm T_{ij}|^2} - \bm {Y^{c*}}_{ij} \bm Y_{ij} \frac{V^*_i V_j}{\bm T^*_{ij}} \nonumber \\
& - |\bm {Y^c}_{ij}|^2 \frac{|V_i|^2}{|\bm T_{ij}|^2} \\
& I_{ij}I_{ij}^*  = |\bm Y_{ij}|^2 \left( \frac{|V_i|^2}{|\bm T_{ij}|^2} - \frac{V_i V^*_j}{\bm T_{ij}} - \frac{V^*_i V_j}{\bm T^*_{ij}} - |V_j|^2 \right) \nonumber \\
& + \bm {Y^c}_{ij} \left( \bm {Y^{c*}}_{ij} \frac{|V_i|^2}{|\bm T_{ij}|^2} +  Y^*_{ij} \frac{|V_i|^2}{|\bm T_{ij}|^2} - \bm Y^*_{ij} \frac{V_i V^*_j}{\bm T_{ij}} \right)
+ \bm {Y^{c*}}_{ij} \left( \bm {Y^c}_{ij} \frac{|V_i|^2}{|\bm T_{ij}|^2}  + Y_{ij} \frac{|V_i|^2}{|\bm T_{ij}|^2} - \bm Y_{ij} \frac{V^*_i V_j}{\bm T^*_{ij}}  \right) \nonumber \\
& - |\bm {Y^c}_{ij}|^2 \frac{|V_i|^2}{|\bm T_{ij}|^2} \\
& I_{ij}I_{ij}^*  = |\bm Y_{ij}|^2 \left( \frac{|V_i|^2}{|\bm T_{ij}|^2} - \frac{V_i V^*_j}{\bm T_{ij}} - \frac{V^*_i V_j}{\bm T^*_{ij}} - |V_j|^2 \right) + \bm {Y^c}_{ij} S_{ij} + \bm {Y^{c*}}_{ij} S^*_{ij}  - |\bm {Y^c}_{ij}|^2 \frac{|V_i|^2}{|\bm T_{ij}|^2} \\
\end{align}
\end{subequations}

\paragraph{Extended Voltage Magnitude Difference}
\begin{subequations}
\begin{align}
& S_{ij} - S_{ji}  = \bm Y^*_{ij} \left( \frac{|V_i|^2}{|\bm{T}_{ij}|^2} - |V_j|^2 \right) - \bm Y^*_{ij} \left( \frac{V_iV_j^*}{\bm{T}_{ij}} - \frac{V_i^*V_j}{\bm{T}^*_{ij}} \right) + \bm {Y^{c*}}_{ij} \frac{|V_i|^2}{|\bm{T}_{ij}|^2} - \bm {Y^{c*}}_{ji} |V_j|^2 \\
& \bm Z^*_{ij} S_{ij} - \bm Z^*_{ij} S_{ji}  = \frac{|V_i|^2}{|\bm{T}_{ij}|^2} - |V_j|^2 - \frac{V_iV_j^*}{\bm{T}_{ij}} + \frac{V_i^*V_j}{\bm{T}^*_{ij}} + \bm Z^*_{ij} \bm {Y^{c*}}_{ij} \frac{|V_i|^2}{|\bm{T}_{ij}|^2} - \bm Z^*_{ij} \bm {Y^{c*}}_{ji} |V_j|^2 \\
& ( 1+ \bm Z^*_{ij} \bm {Y^{c*}}_{ij} ) \frac{|V_i|^2}{|\bm{T}_{ij}|^2} - (1 + \bm Z^*_{ij} \bm {Y^{c*}}_{ji} )|V_j|^2 = \bm Z^*_{ij} S_{ij} - \bm Z^*_{ij} S_{ji} + \frac{V_iV_j^*}{\bm{T}_{ij}} - \frac{V_i^*V_j}{\bm{T}^*_{ij}}
\end{align}
Then expand $S_{ji}$ using \eqref{eq:v_loss_e} and $S_{ij}$ using \eqref{eq:line_s_e_from} as follows,
\begin{align} 
& ( 1+ \bm Z^*_{ij} \bm {Y^{c*}}_{ij} ) \frac{|V_i|^2}{|\bm{T}_{ij}|^2} - (1 + \bm Z^*_{ij} \bm {Y^{c*}}_{ji} )|V_j|^2 = \frac{V_iV_j^*}{\bm{T}_{ij}} - \frac{V_i^*V_j}{\bm{T}^*_{ij}} + \nonumber \\
& \bm Z^*_{ij} S_{ij} - \bm Z^*_{ij} \left( \bm Y^*_{ij} \left( \frac{|V_i|^2}{|\bm T_{ij}|^2} - \frac{V_iV_j^*}{\bm T_{ij}} - \frac{V_i^*V_j}{\bm T^*_{ij}} + |V_j|^2  \right) +  \bm {Y^{c*}}_{ij} \frac{|V_i|^2}{|\bm T_{ij}|^2} + \bm {Y^{c*}}_{ji} |V_j|^2  - S_{ij} \right) \\
& ( 1+ \bm Z^*_{ij} \bm {Y^{c*}}_{ij} ) \frac{|V_i|^2}{|\bm{T}_{ij}|^2} - (1 + \bm Z^*_{ij} \bm {Y^{c*}}_{ji} )|V_j|^2 = \frac{V_iV_j^*}{\bm{T}_{ij}} - \frac{V_i^*V_j}{\bm{T}^*_{ij}} + \nonumber \\
& \bm Z^*_{ij} S_{ij} - \left( \frac{|V_i|^2}{|\bm T_{ij}|^2} - \frac{V_iV_j^*}{\bm T_{ij}} - \frac{V_i^*V_j}{\bm T^*_{ij}} + |V_j|^2  \right) -   \bm Z^*_{ij} \bm {Y^{c*}}_{ij} \frac{|V_i|^2}{|\bm T_{ij}|^2} -  \bm Z^*_{ij} \bm {Y^{c*}}_{ji} |V_j|^2  + \bm Z^*_{ij}S_{ij}  \\ \\
& ( 1+ \bm Z^*_{ij} \bm {Y^{c*}}_{ij} ) \frac{|V_i|^2}{|\bm{T}_{ij}|^2} - (1 + \bm Z^*_{ij} \bm {Y^{c*}}_{ji} )|V_j|^2 = \frac{V_iV_j^*}{\bm{T}_{ij}} - \frac{V_i^*V_j}{\bm{T}^*_{ij}} + \nonumber \\
& \bm Z^*_{ij} S_{ij} - \left( \frac{|V_i|^2}{|\bm T_{ij}|^2} - \frac{V_iV_j^*}{\bm T_{ij}} - \frac{V_i^*V_j}{\bm T^*_{ij}} + |V_j|^2  \right) -   \bm Z^*_{ij} \bm {Y^{c*}}_{ij} \frac{|V_i|^2}{|\bm T_{ij}|^2} -  \bm Z^*_{ij} \bm {Y^{c*}}_{ji} |V_j|^2  +  \nonumber \\
& \bm Z^*_{ij} \left( \left( \bm Y_{ij} + \bm {Y^c}_{ij} \right)^* \frac{|V_i|^2}{|\bm{T}_{ij}|^2} - \bm Y^*_{ij} \frac{V_iV^*_j}{\bm{T}_{ij}} \right) \\
& ( 1+ \bm Z^*_{ij} \bm {Y^{c*}}_{ij} ) \frac{|V_i|^2}{|\bm{T}_{ij}|^2} - |V_j|^2 =  \bm Z^*_{ij} S_{ij} + \frac{|V_i|^2}{|\bm{T}_{ij}|^2} - \frac{V_i^*V_j}{\bm{T}^*_{ij}} - \left( \frac{|V_i|^2}{|\bm T_{ij}|^2} - \frac{V_iV_j^*}{\bm T_{ij}} - \frac{V_i^*V_j}{\bm T^*_{ij}} + |V_j|^2  \right) 
\end{align}
Finally $\bm Z_{ij} \bm {Y^c}_{ij} |V_i|^2/|\bm{T}_{ij}|^2 - \bm Z_{ij} \bm {Y^c}_{ij} |V_i|^2/|\bm{T}_{ij}|^2$ is introduced to generate a $S^*_{ij}$ term as follows,
\begin{align} 
& ( 1+ \bm Z^*_{ij} \bm {Y^{c*}}_{ij} + \bm Z_{ij} \bm {Y^c}_{ij} ) \frac{|V_i|^2}{|\bm{T}_{ij}|^2} - |V_j|^2 =  \bm Z^*_{ij} S_{ij} + (1+\bm Z_{ij} \bm {Y^c}_{ij}) \frac{|V_i|^2}{|\bm{T}_{ij}|^2} - \frac{V_i^*V_j}{\bm{T}^*_{ij}} - \nonumber \\ 
& \left( \frac{|V_i|^2}{|\bm T_{ij}|^2} - \frac{V_iV_j^*}{\bm T_{ij}} - \frac{V_i^*V_j}{\bm T^*_{ij}} + |V_j|^2  \right) \\
& ( 1+ \bm Z^*_{ij} \bm {Y^{c*}}_{ij} + \bm Z_{ij} \bm {Y^c}_{ij} ) \frac{|V_i|^2}{|\bm{T}_{ij}|^2} - |V_j|^2 = \bm Z^*_{ij} S_{ij} +  \bm Z_{ij} S^*_{ij} - \left( \frac{|V_i|^2}{|\bm T_{ij}|^2} - \frac{V_iV_j^*}{\bm T_{ij}} - \frac{V_i^*V_j}{\bm T^*_{ij}} + |V_j|^2  \right) 
\end{align}
\end{subequations}

\section{Combination of Line Loss and Voltage Magnitude Difference}
\label{sec:prop_cdf_e_proof}

This appendix provides a detailed derivation of the $\bm Z_{ij} S_{ij}^*$ property, which holds in \refcdfepf.  This property is used in the proof of Section \ref{sec:equiv}.
In the presence of the line loss equation \eqref{eq:cdf_e_3}, the following property holds,
\begin{subequations}
\begin{align}
& S_{ij}+S_{ji} = \bm Z_{ij} \left( L_{ij} - \bm {Y^c}_{ij} S_{ij} - \bm {Y^{c*}}_{ij} S^*_{ij} + |\bm {Y^c}_{ij}|^2 \frac{W_i}{|\bm T_{ij}|^2} \right) +  \bm {Y^{c*}}_{ij} \frac{W_i}{|\bm T_{ij}|^2} + \bm {Y^{c*}}_{ji} W_j  \\
& |\bm Z_{ij}|^2 \left( L_{ij} - \bm {Y^c}_{ij} S_{ij} - \bm {Y^{c*}}_{ij} S^*_{ij} + |\bm {Y^c}_{ij}|^2 \frac{W_i}{|\bm T_{ij}|^2} \right) = \bm Z_{ij}^* S_{ij} + \bm Z_{ij}^* S_{ji} - \bm Z_{ij}^* \bm {Y^{c*}}_{ij} \frac{W_i}{|\bm T_{ij}|^2} - \bm Z_{ij}^* \bm {Y^{c*}}_{ji} W_j 
\end{align}
%
When combined the voltage magnitude difference equation \eqref{eq:cdf_e_4}, yields, 
%
\begin{align}
& \left(1 + \bm Z_{ij}\bm {Y^c}_{ij} + \bm Z^*_{ij}\bm {Y^{c*}}_{ij} \right) \frac{W_i}{|\bm T_{ij}|^2} - W_j = \bm Z^*_{ij} S_{ij} + \bm Z_{ij} S^*_{ij} - |\bm Z_{ij}|^2 \left( L_{ij} - \bm {Y^c}_{ij} S_{ij} - \bm {Y^{c*}}_{ij} S^*_{ij} + |\bm {Y^c}_{ij}|^2 \frac{W_i}{|\bm T_{ij}|^2} \right) \\
& \left(1 + \bm Z_{ij}\bm {Y^c}_{ij} + \bm Z^*_{ij}\bm {Y^{c*}}_{ij} \right) \frac{W_i}{|\bm T_{ij}|^2}-W_j = \bm Z^*_{ij} S_{ij} + \bm Z_{ij} S_{ij}^* - \bm Z_{ij}^* S_{ij} - \bm Z_{ij}^* S_{ji}  + \bm Z_{ij}^* \bm {Y^{c*}}_{ij} \frac{W_i}{|\bm T_{ij}|^2} + \bm Z_{ij}^* \bm {Y^{c*}}_{ji} W_j  \\
& \left(1 + \bm Z_{ij}\bm {Y^c}_{ij} \right) \frac{W_i}{|\bm T_{ij}|^2} - \left(1 + \bm Z^*_{ij}\bm {Y^{c*}}_{ji} \right) W_j = \bm Z_{ij} S_{ij}^* - \bm Z_{ij}^* S_{ji} \\
&  \bm Z_{ij} S_{ij}^* = \bm Z_{ij}^* S_{ji} + \left(1 + \bm Z_{ij}\bm {Y^c}_{ij} \right) \frac{W_i}{|\bm T_{ij}|^2} - \left(1 + \bm Z^*_{ij}\bm {Y^{c*}}_{ji} \right) W_j 
\end{align}
\end{subequations}

\section{The Extended Convex DistFlow in Real Numbers}

In the interest of clean proofs, this document focuses on a complex representation of the extended CDF model.
In practice however, the a real number representation is often needed for implementation.
This section presents the real number version of \refcdfepf~to aid the implementations of this model.

First we develop a real number short-hand for $\bm Z^*_{ij} \bm{T}_{ij}$ as follows,
\begin{subequations}
\begin{align}
& \Re \left( \bm Z^*_{ij} \bm{T}_{ij} \right) = \bm {tz^R}_{ij}  = \bm r_{ij} \bm {t^R}_{ij} + \bm x_{ij} \bm {t^I}_{ij} \;\; (i,j) \in E \\
& \Im \left( \bm Z^*_{ij} \bm{T}_{ij} \right) = \bm {tz^I}_{ij}  = \bm r_{ij} \bm {t^I}_{ij} - \bm x_{ij} \bm {t^R}_{ij} \;\; (i,j) \in E
\end{align}
\end{subequations}
Notice that the superscripts $R,I$ are used for the real and imaginary parts of a complex number respectivly.
Expanding \eqref{eq:vv_factor_e} using this short-hand into real and imaginary components $\Re(V_iV^*_j), \Im(V_iV^*_j)$ we have,
\begin{subequations}
\begin{align}
& \Re(V_iV^*_j) = \left( \bm {t^R}_{ij} + \bm {tz^R}_{ij} \bm {g^c}_{ij} + \bm {tz^I}_{ij} \bm {b^c}_{ij} \right) \frac{v^2_i}{\bm t^2_{ij}} - \bm {tz^R}_{ij}p_{ij} + \bm {tz^I}_{ij}q_{ij}   \;\; (i,j) \in E \\
& \Im(V_iV^*_j) = \left( \bm {t^I}_{ij} + \bm {tz^I}_{ij} \bm {g^c}_{ij} - \bm {tz^R}_{ij} \bm {b^c}_{ij} \right) \frac{v^2_i}{\bm t^2_{ij}} - \bm {tz^I}_{ij}p_{ij} - \bm {tz^R}_{ij}q_{ij}  \;\; (i,j) \in E
\end{align}
\end{subequations}
both of which are used to implement the voltage angle difference constraints in any model with $v,p,q$ variables.  
The complete real number implementation of \refcdfepf~is presented in Model \ref{model:cdf_pf_e_real}.

\begin{model}[t]
\caption{The Extended CDF Relaxation of AC Power Flow \refcdfepf~in Real Numbers.}
\label{model:cdf_pf_e_real}
\begin{subequations}
\begin{align}
\mbox{\bf variables: \hspace{-1.3cm} } \nonumber \\
& p^g_i, q^g_i \;\; \forall i\in N \nonumber \\
& w_i \;\; \forall i\in N \nonumber \\
& l_{ij} \;\; \forall (i,j)\in E \nonumber \\
& p_{ij}, q_{ij} \;\; \forall (i,j)\in E \cup E^R \nonumber \\
\mbox{\bf subject to: \hspace{-1.3cm} } \nonumber \\
& (\bm {v^l}_i)^2 \leq w_i \leq (\bm {v^u}_i)^2 \;\; \forall i \in N \\
& \bm {p^{gl}}_i \leq p^g_i \leq \bm {p^{gu}}_i \;\; \forall i \in N \\
& \bm {q^{gl}}_i \leq q^g_i \leq \bm {q^{gu}}_i \;\; \forall i \in N \\
& p^g_i - {\bm p^d_i} - \bm g^s_{i} w_i = \sum_{\substack{(i,j)\in E \cup E^R}} p_{ij} \;\; \forall i\in N \\ 
& q^g_i - {\bm q^d_i} + \bm b^s_{i} w_i = \sum_{\substack{(i,j)\in E \cup E^R}} q_{ij} \;\; \forall i\in N \\ 
& \tan(-\bm {\theta^\Delta}) \left( \left( \bm {t^R}_{ij} + \bm {tz^R}_{ij} \bm {g^c}_{ij} + \bm {tz^I}_{ij} \bm {b^c}_{ij} \right) \frac{w_i}{\bm t^2_{ij}} - \bm {tz^R}_{ij}p_{ij} + \bm {tz^I}_{ij}q_{ij}  \right) \leq \nonumber \\
& \left( \bm {t^I}_{ij} + \bm {tz^I}_{ij} \bm {g^c}_{ij} - \bm {tz^R}_{ij} \bm {b^c}_{ij} \right) \frac{w_i}{\bm t^2_{ij}} - \bm {tz^I}_{ij}p_{ij} - \bm {tz^R}_{ij}q_{ij}  \;\; (i,j) \in E \\
& \tan(\bm {\theta^\Delta}) \left( \left( \bm {t^R}_{ij} + \bm {tz^R}_{ij} \bm {g^c}_{ij} + \bm {tz^I}_{ij} \bm {b^c}_{ij} \right) \frac{w_i}{\bm t^2_{ij}} - \bm {tz^R}_{ij}p_{ij} + \bm {tz^I}_{ij}q_{ij}  \right) \geq \nonumber \\ 
& \left( \bm {t^I}_{ij} + \bm {tz^I}_{ij} \bm {g^c}_{ij} - \bm {tz^R}_{ij} \bm {b^c}_{ij} \right) \frac{w_i}{\bm t^2_{ij}} - \bm {tz^I}_{ij}p_{ij} - \bm {tz^R}_{ij}q_{ij}  \;\; (i,j) \in E \\
& p_{ij} + p_{ji} = \bm r_{ij} \left( l_{ij} - 2( \bm {g^c}_{ij} p_{ij} - \bm {b^c}_{ij} q_{ij}) + (\bm {y^c}_{ij})^2 \frac{w_i}{\bm t_{ij}^2} \right) + \bm {g^c}_{ij} \frac{w_i}{\bm t_{ij}^2} + \bm {g^c}_{ji} w_j \;\; (i,j) \in E \\
& q_{ij} + q_{ji} = \bm x_{ij} \left( l_{ij} - 2( \bm {g^c}_{ij} p_{ij} - \bm {b^c}_{ij} q_{ij}) + (\bm {y^c}_{ij})^2 \frac{w_i}{\bm t_{ij}^2} \right) - \bm {b^c}_{ij} \frac{w_i}{\bm t_{ij}^2} - \bm {b^c}_{ji} w_j\;\; (i,j) \in E \\
& \left( 1 + 2(\bm r_{ij} \bm {g^c}_{ij} - \bm x_{ij} \bm {b^c}_{ij}) \right) \frac{w_i}{\bm {t}^2_{ij}}- w_j = \nonumber \\ 
& 2(\bm r_{ij}p_{ij} + \bm x_{ij}q_{ij}) - (\bm r^2_{ij} + \bm x^2_{ij}) \left( l_{ij} - 2( \bm {g^c}_{ij} p_{ij} - \bm {b^c}_{ij} q_{ij}) + (\bm {y^c}_{ij})^2 \frac{w_i}{\bm t_{ij}^2} \right) \;\; (i,j) \in E  \\
& p_{ij}^2 + q_{ij}^2 \leq \frac{w_i}{\bm {t}^2_{ij}}l_{ij} \;\; (i,j) \in E \\
& p_{ij}^2 + q_{ij}^2 \leq (\bm {s^u}_{ij})^2 \;\; (i,j) \in E \cup E^R
\end{align}
\end{subequations}
\end{model}

\end{document}